\newtheorem*{rep@theorem}{\rep@title}\newcommand{\newreptheorem}[2]{%
\newenvironment{rep#1}[1]{%
\def\rep@title{\bf #2 \ref{##1}}%
\begin{rep@theorem}}%
{\end{rep@theorem}}}
\numberwithin{equation}{section}
\newtheorem{theorem}{Theorem}[section]
\newtheorem{lemma}{Lemma}[section]
\newtheorem{corollary}[theorem]{Corollary}
\theoremstyle{definition}
\newtheorem{remark}{Remark}[section]
\newcommand{\qtq}[1]{\qquad\text{#1}\qquad}
\newcommand{\cyc}[1]{\langle #1\rangle}
\newcommand{\R}{\mathbb{R}}
\newcommand{\C}{\mathbb{C}}
\newcommand{\ep}{\epsilon}
\newcommand{\grad}{\nabla}
\renewcommand{\epsilon}{\varepsilon}
\title[Resolvent Estimates for 3D NLS with Inverse-Square Potential]{Resolvent Estimates for the 3D Schr\"odinger Operator with Inverse-Square Potential}
\author{A. Adam Azzam}
\date{\today}
\begin{document}
\bibliographystyle{plain}
\maketitle
\noindent
\begin{abstract}
We consider the unitary group $e^{-itH}$ for the Schr\"odinger operator with inverse-square potential $H:=-\Delta-\frac{1}{4|x|^2}$. We adapt Combes-Thomas estimates to show that, when restricted to non-radial functions, $H$ enjoys much better estimates that mirror those of the Laplacian. 

\end{abstract}
\tableofcontents

\newpage
\section{Introduction} In this paper, we will consider harmonic analysis questions concerning the unitary group $e^{-itH}$ for the Schr\"odinger operator with inverse-square potential\begin{align}\label{338}H:=-\Delta-\frac{1}{4|x|^2}\end{align} where $\Delta$ is the Laplacian in $\mathbb{R}^3$. 

The operator $H$ in \eqref{338} belongs to a family of operators 
\begin{align}\label{422}H_{a}:=-\Delta-\frac{a}{|x|^2}\qquad a\le \tfrac{1}{4} \end{align}
 of interest in quantum mechanics (see \cite{Case}, \cite{Kalf}). Since the potential term is homogenous of order $-2$, the potential and Laplacian scale exactly the same. For this reason, this makes the study of the associated PDEs 
 \begin{align}\label{ls}&\left\{\begin{array}{ll}(i\partial_t-H_{a})u=0,\qquad &(t,x)\in \R\times \R^3,\\ u(0,x)=u_0(x)\in \dot{H}_x^1(\mathbb{R}^3)\end{array}\right.\\ &\label{nls}\left\{\begin{array}{ll}(i\partial_t-H_{a})u=|u|^pu,\qquad &(t,x)\in \R\times \R^3,\\ u(0,x)=u_0(x)\in \dot{H}_x^1(\mathbb{R}^3)\end{array}\right.\end{align} of particular mathematical interest. Indeed, since the potential and Laplacian are of equal strength at every scale, analysis of \eqref{ls} is immune to standard perturbative methods. Thus, Schr\"odinger equations with inverse square potentials have necessitated a more delicate approach. This challenge has been met with a long campaign to understand those properties enjoyed by $H_{a}$ and the local and global behavior of solutions to $\eqref{nls}$. 
 
The choice of the constant $a=\tfrac{1}{4}$ in this paper is critical in another sense. This constant corresponds to the sharp constant in the Hardy inequality.
\begin{lemma}[Hardy Inequality]\label{hardy}If $u\in C_0^\infty(\mathbb{R}^3)$, then 
\begin{align}\label{blunthardyineq}\frac{1}{4}\int\frac{|u(x)|^2}{|x|^2}\ dx\le \int |\grad u(x)|^2\ dx. \end{align}Moreover, \eqref{blunthardyineq} fails if the constant $\frac{1}{4}$ is replaced by any constant $a>\frac{1}{4}$. \end{lemma}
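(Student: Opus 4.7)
The plan is to establish the inequality via a ground-state substitution, then verify sharpness with a scaling-based minimizing sequence.

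For \eqref{blunthardyineq}, I would set $u(x) = |x|^{-1/2} v(x)$ and expand $|\nabla u|^2$ via the product rule. The cross term, after an integration by parts, becomes a quantity weighted by $\nabla \cdot (|x|^{-3} x)$, which vanishes identically on $\R^3\setminus\{0\}$ — this is the special feature of dimension three, since $|x|^{-3}x = -\nabla(|x|^{-1})$ is proportional to the Newtonian gradient and hence divergence free away from the origin. The two remaining terms yield the identity
\begin{align*}
\int_{\R^3}|\nabla u|^2\,dx - \tfrac{1}{4}\int_{\R^3}\tfrac{|u(x)|^2}{|x|^2}\,dx = \int_{\R^3}\tfrac{|\nabla v(x)|^2}{|x|}\,dx \ge 0,
\end{align*}
which is exactly \eqref{blunthardyineq}. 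The only subtlety is that the integration by parts must be carried out on $\R^3 \setminus B_\epsilon(0)$ and passed to the limit $\epsilon \to 0$; the boundary integral on $\{|x| = \epsilon\}$ is easily controlled because $v = |x|^{1/2}u$ satisfies $|v|^2 = O(|x|)$ near the origin whenever $u$ is bounded there, leaving a contribution of size $O(\epsilon)$.

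For sharpness, the key observation is that the remainder $\int |\nabla v|^2 / |x|\,dx$ can be made arbitrarily small relative to $\int |v|^2/|x|^3\,dx$. Restricting to radial $v(x) = \phi(\log|x|)$, polar coordinates together with the change of variables $s = \log r$ reduce this ratio to the one-dimensional Rayleigh quotient $\int |\phi'(s)|^2\,ds / \int |\phi(s)|^2\,ds$. Plugging in the dilates $\phi_n(s) = \phi(s/n)$ scales this quotient like $n^{-2}$, so the corresponding functions $u_n(x) = |x|^{-1/2}\phi_n(\log|x|)$ — which lie in $C_0^\infty(\R^3 \setminus \{0\}) \subset C_0^\infty(\R^3)$ once $\phi$ is compactly supported — give a sequence whose Rayleigh quotient $\int |\nabla u_n|^2\,dx \big/ \int |u_n|^2/|x|^2\,dx$ tends to $\tfrac{1}{4}$. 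Consequently any constant $a > \tfrac{1}{4}$ is violated for $n$ large.

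The main obstacle I anticipate is the boundary-term bookkeeping for the integration by parts: while the interior calculation reduces elegantly to a divergence-free identity, some care is needed to avoid sweeping the behavior at $x=0$ under the rug, since the pointwise identity being integrated is singular there. Once that cutoff-and-limit argument is carried out once and for all, both the inequality and its sharpness fall out cleanly.
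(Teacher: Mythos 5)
The paper states Lemma \ref{hardy} as a classical fact and offers no proof of its own, so there is nothing internal to compare against; judged on its own terms, your argument is correct and is the standard self-contained one. The ground-state substitution $u=|x|^{-1/2}v$ does produce the identity $\int|\grad u|^2\,dx-\tfrac14\int|x|^{-2}|u|^2\,dx=\int|x|^{-1}|\grad v|^2\,dx$, because the cross term is $-\tfrac12\int|x|^{-3}x\cdot\grad|v|^2\,dx$ and $\grad\cdot(|x|^{-3}x)=0$ away from the origin; your cutoff on $\{|x|\ge\epsilon\}$ is genuinely needed (distributionally $\grad\cdot(|x|^{-3}x)=4\pi\delta_0$), and the boundary contribution $\epsilon^{-2}\int_{|x|=\epsilon}|v|^2\,dS=\epsilon^{-1}\int_{|x|=\epsilon}|u|^2\,dS=O(\epsilon)$ vanishes as you say, while all integrals converge since $|x|^{-2}$ is locally integrable in $\R^3$. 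The sharpness argument is also right: for $v(x)=\phi(\log|x|)$ radial with $\phi\in C_0^\infty(\R)$, both $\int|x|^{-1}|\grad v|^2\,dx$ and $\int|x|^{-3}|v|^2\,dx$ reduce to $4\pi\int|\phi'(s)|^2\,ds$ and $4\pi\int|\phi(s)|^2\,ds$ respectively, and the dilates $\phi_n(s)=\phi(s/n)$ drive the Rayleigh quotient to zero like $n^{-2}$, so the functions $u_n=|x|^{-1/2}\phi_n(\log|x|)\in C_0^\infty(\R^3\setminus\{0\})$ have $\int|\grad u_n|^2\,dx\big/\int|x|^{-2}|u_n|^2\,dx\to\tfrac14$, defeating any constant $a>\tfrac14$.
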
 
When $a\le \frac{1}{4}$ and $u\in C_0^\infty(\mathbb{R}^3)$ we see that \begin{align}\cyc{H_{a}u,u}_{L_x^2(\mathbb{R}^3)}=\int_{\mathbb{R}^3}|\grad u(x)|\ dx-a\int \tfrac{|u(x)|^2}{|x|^2}\ dx\ge 0, \end{align} and so $H_a$ is positive semi-definite. When $a>\tfrac{1}{4}$, all self-adjoint extensions of $H_{a}$ are unbounded below. In this case the spectrum of $H_{a}$ has infinitely many negative eigenvalues that diverge to $-\infty$ (\cite{Planchon}), making it impossible to obtain global dispersive estimates.

Substantial progress has been made in understanding $H_{a}$ and \eqref{nls} in both the critical case $a=1/4$ and the subcritical case $a<\frac{1}{4}$. The first wave of progress in the linear problem came in the form of studying the full range of dispersive and Strichartz estimates for solutions to \eqref{ls}. When $a\le \tfrac{1}{4}$, \cite{Burq}, Burq--Planchon--Stalker--Tahvildar-Zadeh demonstrated local smoothing for solutions to \eqref{ls}. The nonlinear problem in this critical regime was advanced in \cite{Suzuki}, when Suzuki demonstrated global wellposedness for \eqref{nls} in $\mathbb{R}^3$ when $a\le \frac{1}{4}$ and $p<5$ for a suitable class of initial data by proving Strichartz estimates for \eqref{ls} avoiding the sharp endpoint $(2,6)$. Recently, in \cite{Mizutani}, Mizutani recovers the sharp endpoint in a Lorentz space framework. 

In \cite{Kivi1}, Killip-Miao-Visan-Zhang-Zheng prove a variety of harmonic analysis tools for studying $H_a$ when $a\le \tfrac{1}{4}$. In particular, they develop a Littlewood-Paley theory for $H_a$, which allows them to prove in \cite{Kivi2} that \eqref{nls} with $p=5$ is globally well-posed when $a<\tfrac{1}{4}-\tfrac{1}{25}$. This represents the first progress made in treating the energy critical nonlinearity $|u|^4u$ in the presence of an inverse square potential.

\section{Main Results}

In this paper, we advocate for the idea that any problem involving the critical inverse square potential should be decomposed into a system of equations: one for the radial part of the solution (which is more susceptible to classical ODE techniques), and another for the remainder. Indeed, what is clear from previous advances in \cite{Burq}, \cite{Mizutani}, and \cite{Mizutani2} is that data in $L^2\ominus L_\text{rad}^2$ enjoy a wealth of useful estimates for $H_{a}$ which in turn yield Strichartz estimates that are not immediately available in the general case. When restricted to higher angular momentum, the operator $H_a$ enjoys estimates that mirror those of the Laplacian. 

To make this discussion more precise, let $P:L^2(\mathbb{R}^3)\to L_{\text{rad}}^2(\mathbb{R}^3)$ be the projection defined by \[Pf(x)=\frac{1}{4\pi^2}\int_{\mathbb{S}^2}f(|x|\omega)d\sigma(\omega).\]We define $P^{\perp}=I-P$ to be the orthogonal projection onto $L_{\perp}^2:=L^2\ominus L_{\text{rad}}^2$, the space of angular functions. As $P$ and $P^{\perp}$ commute with $\Delta$ and $-\frac{1}{4|x|^2}$, they commute with $H$. We will often write \[H^{\perp}:=P^{\perp}H=HP^{\perp}\] when convenient. 

The advantage of restricting to functions in $L_{\perp}^2$ is due, in part, to an improvement to Lemma \ref{hardy} in this setting. For a clear exposition, see \cite{Ekholm2006}.

\begin{lemma}[Improved Hardy Inequality]
\label{sharphardy} If $f:\mathbb{R}^3\to \C$ is a Schwartz function, then
\begin{align}
\label{sharphardyineq} \frac{9}{4}\int_{\mathbb{R}^3}\frac{|P^{\perp}f|^{2}}{|x|^2}\ dx\le \int_{\mathbb{R}^3}|\grad P^{\perp}f|^2\ dx.
\end{align}
Moreover, \eqref{sharphardyineq} fails if the constant $\frac{9}{4}$ is replaced by any constant $a>\frac{9}{4}$. \end{lemma}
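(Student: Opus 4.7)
The plan is to diagonalize both sides of the inequality by expanding into spherical harmonics, reducing to a family of one-dimensional radial inequalities, each of which follows from the classical Hardy inequality with a boost from the angular Laplacian.

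Concretely, I would write a Schwartz function as $f(x)=\sum_{\ell\ge 0}\sum_{|m|\le \ell} f_{\ell,m}(r)\,Y_\ell^m(\omega)$, where $r=|x|$, $\omega = x/|x|$, and the $Y_\ell^m$ are the usual $L^2(\mathbb{S}^2)$-orthonormal spherical harmonics with $-\Delta_{\mathbb{S}^2}Y_\ell^m=\ell(\ell+1)Y_\ell^m$. The radial projection $P$ picks off precisely the $\ell=0$ term, so $P^\perp f=\sum_{\ell\ge 1}\sum_m f_{\ell,m}(r)\,Y_\ell^m(\omega)$. Using polar coordinates and orthonormality,
\begin{align*}
\int_{\mathbb{R}^3}\frac{|P^\perp f|^2}{|x|^2}\,dx &= \sum_{\ell\ge 1}\sum_m \int_0^\infty |f_{\ell,m}(r)|^2\,dr,\\
\int_{\mathbb{R}^3}|\grad P^\perp f|^2\,dx &= \sum_{\ell\ge 1}\sum_m\left[\int_0^\infty |f_{\ell,m}'(r)|^2 r^2\,dr + \ell(\ell+1)\int_0^\infty |f_{\ell,m}(r)|^2\,dr\right].
\end{align*}

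So the desired inequality reduces term-by-term to showing, for every $\ell\ge 1$ and every smooth compactly supported $u:(0,\infty)\to\C$,
\begin{align*}
\int_0^\infty |u'(r)|^2 r^2\,dr + \ell(\ell+1)\int_0^\infty |u(r)|^2\,dr \ge \tfrac{9}{4}\int_0^\infty |u(r)|^2\,dr.
\end{align*}
The radial version of Lemma \ref{hardy} (equivalently, the one-dimensional Hardy inequality on the half-line applied to $v(r)=ru(r)$) gives $\int_0^\infty |u'|^2 r^2\,dr \ge \tfrac14\int_0^\infty |u|^2\,dr$, so the left side is bounded below by $(\tfrac14+\ell(\ell+1))\int_0^\infty |u|^2\,dr$. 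The minimum of $\tfrac14+\ell(\ell+1)$ over $\ell\ge 1$ is attained at $\ell=1$, where it equals $\tfrac14+2=\tfrac94$. Summing in $\ell,m$ recovers \eqref{sharphardyineq}.

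For sharpness, I would exhibit a maximizing sequence concentrated in the $\ell=1$ sector. Take $f_n(x)=\chi_n(r)Y_1^0(\omega)$, where $\chi_n\in C_0^\infty((0,\infty))$ is chosen so that $v_n(r):=r\chi_n(r)$ is a standard near-extremizer of the 1D Hardy inequality $\int_0^\infty |v'|^2\,dr\ge \tfrac14\int_0^\infty |v/r|^2\,dr$ (e.g.\ logarithmic cut-offs of $r^{1/2}$ away from $0$ and $\infty$). Then $\chi_n$ saturates the radial Hardy bound while the $\ell=1$ contribution of the angular Laplacian gives exactly the extra $2$, so the ratio of the right side to the left side in \eqref{sharphardyineq} tends to $1$; this rules out any constant larger than $\tfrac94$.

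The entire argument is essentially bookkeeping; the only minor subtlety is justifying the reduction to $C_0^\infty((0,\infty))$ radial profiles, which for Schwartz $f$ is routine because each $f_{\ell,m}$ is smooth and rapidly decaying, and the boundary terms in the integration by parts vanish at $0$ thanks to the factor $r^2$ from the volume element. No step presents a real obstacle.
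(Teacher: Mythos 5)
Your proof is correct. The paper itself offers no proof of Lemma \ref{sharphardy}; it simply defers to the cited exposition of Ekholm--Frank, and the argument given there is precisely the one you reconstruct: decompose into spherical harmonics, note that each sector $\ell\ge 1$ contributes $\tfrac14+\ell(\ell+1)$ (radial Hardy constant plus the angular eigenvalue), and minimize at $\ell=1$ to get $\tfrac14+2=\tfrac94$, with sharpness witnessed by near-extremizers of the radial Hardy inequality placed in the $\ell=1$ sector. Your bookkeeping (the polar-coordinate identities, the vanishing boundary terms for Schwartz data, and the fact that the test functions $\chi_n(r)Y_1^0(\omega)$ are admissible and purely non-radial) all checks out, so your write-up is in effect a self-contained version of the standard proof the paper outsources.
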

For data in $L_\perp^2$ this improvement allows one to apply techniques available only in the subcritical regime $a<\frac{1}{4}$ at the critical constant $a=\frac{1}{4}$. In this paper, we exploit this improvement to expand upon the harmonic analysis tools developed in \cite{Kivi1}. We seek out which embeddings enjoyed by $\Delta$ are also enjoyed by $H$ in order to develop a Littlewood-Paley theory for $H$. Towards this end, we study the resolvent of $H$ and, in particular, how it differs from the resolvent of $\Delta$ when projected to angular functions.

In this paper we improve the resolvent estimate from \cite{Mizutani} to prove the following:
\begin{theorem}\label{1004}Let $(q,s)\in [1,\infty)\times [1,\infty)$ satisfy
\begin{align}
\tfrac{4}{3}\le \tfrac{1}{q}+\tfrac{1}{s'}\le \tfrac{5}{3}.
\end{align} If, additionally, \begin{align}\sqrt{2}>\max\{2-\tfrac{3}{s'}, 2-\tfrac{3}{q}, \tfrac{3}{q}-\tfrac{5}{2}, \tfrac{3}{s'}-\tfrac{5}{2}\},\end{align} then 
 \begin{align}\| P^{\perp}((H+1)^{-1}-(-\Delta+1)^{-1}) \|_{L_x^{q}(\mathbb{R}^3)\to L_{x}^{s}(\mathbb{R}^3)}<\infty.\end{align}
\end{theorem}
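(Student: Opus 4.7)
The natural starting point is the second resolvent identity
\[
(H+1)^{-1} - (-\Delta+1)^{-1} = \tfrac{1}{4}\, (H+1)^{-1}\, |x|^{-2}\, (-\Delta+1)^{-1},
\]
which, after composing with $P^{\perp}$ on the left (and using that $P^{\perp}$ commutes with $H$), reduces the theorem to bounding the threefold composition $P^{\perp}(H+1)^{-1}\circ M_{|x|^{-2}/4}\circ(-\Delta+1)^{-1}:L^q\to L^s$. I would split this via two intermediate exponents $q_1,q_2$ and treat each factor separately on a pair of Lebesgue spaces.

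The two outer links are standard. For $(-\Delta+1)^{-1}:L^q\to L^{q_1}$, the Bessel kernel $\tfrac{e^{-|x|}}{4\pi|x|}$ lies in $L^r(\R^3)$ for every $r<3$, so Young's inequality furnishes the estimate for any $q_1$ with $\tfrac{1}{q_1}=\tfrac{1}{q}-\alpha$, $\alpha\in(0,\tfrac{2}{3})$. For $M_{|x|^{-2}}:L^{q_1}\to L^{q_2}$, the inclusion $|x|^{-2}\in L^{3/2,\infty}(\R^3)$ combined with H\"older's inequality in Lorentz spaces yields the scaling $\tfrac{1}{q_2}=\tfrac{1}{q_1}+\tfrac{2}{3}$ in the interior of the admissible range. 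The combined loss-and-gain explains the dimensional constraint $\tfrac{4}{3}\le\tfrac{1}{q}+\tfrac{1}{s'}\le\tfrac{5}{3}$.

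The heart of the matter is the inner factor $P^{\perp}(H+1)^{-1}:L^{q_2}\to L^{s}$, where Lemma~\ref{sharphardy} and the hypothesis involving $\sqrt{2}$ come in. I would handle it via spherical harmonic decomposition: on the sector of angular momentum $\ell\ge 1$, the substitution $f(x)=|x|^{-1}g(|x|)Y_\ell^m(\omega)$ reduces $H+1$ to the half-line operator
\[
L_\nu := -\partial_r^2 + \tfrac{\nu^2-1/4}{r^2} + 1, \qquad \nu=\nu_\ell := \sqrt{\ell(\ell+1)},
\]
whose Green's function is the explicit modified-Bessel expression $\sqrt{rr'}\,I_\nu(r_<)K_\nu(r_>)$. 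The small-argument asymptotics $I_\nu(r)\asymp r^\nu$, $K_\nu(r)\asymp r^{-\nu}$ together with the exponential decay of $K_\nu$ at infinity furnish pointwise kernel bounds. A Schur test against the radial volume $r^2\,dr$, rewritten in terms of $(q_2,s)$, then delivers boundedness on each sector, with integrability conditions dictated by the Bessel index $\nu_\ell$. The smallest index $\nu_1=\sqrt{2}$ produces the binding constraints; for $\ell\ge 2$ one has $\nu_\ell\ge\sqrt{6}$ and the analogous inequalities are strictly weaker. Pulling these constraints back through the chain relations to $(q,s)$ reproduces exactly the four inequalities $\sqrt{2}>\max\{2-3/s',\,2-3/q,\,3/q-5/2,\,3/s'-5/2\}$.

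The principal obstacle I anticipate is producing these bounds uniformly in $\ell$ so that the angular sum converges, and threading the intermediate exponents $q_1,q_2$ through the non-endpoint interiors of all three factor estimates. The $\ell$-uniformity should follow from orthogonality of spherical harmonics at the $L^2$ level combined with the monotonicity $\nu_\ell\uparrow\infty$ and the Combes--Thomas-style exponentially weighted kernel bounds advertised in the abstract; the exponent bookkeeping, while intricate, is routine once the three factor ranges are laid out.
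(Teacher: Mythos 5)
Your plan is a genuinely different route from the paper's, but as sketched it has a real gap at its center. The paper deliberately uses the \emph{second-order} resolvent identity \eqref{resolvent}, so that the unknown operator $(H+1)^{-1}$ only ever appears flanked by the weights $|x|^{-1\mp p}$; the only input about $H$ is then the weighted $L^2$ bound of Corollary \ref{ResolventEst} (Combes--Thomas), which is exactly where $|p|<\sqrt{2}$ enters via the improved Hardy inequality, and all $L^p$ information comes from explicit kernel estimates for the \emph{free} resolvent, assembled by a duality/dyadic Schur-test argument over the regions determined by the ordering of $|x|,|y|,1$. Your first-order identity instead forces you to prove a genuine Lebesgue-space bound $P^{\perp}(H+1)^{-1}:L^{q_2}\to L^{s}$, which is essentially the content of the theorem itself (the paper obtains such Sobolev-type bounds, Corollaries \ref{134} and \ref{1005}, as \emph{consequences} of Theorem \ref{1004}). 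Your proposed proof of that inner bound --- sector-by-sector Bessel Green's functions $\sqrt{rr'}I_{\nu_\ell}(r_<)K_{\nu_\ell}(r_>)$ with $\nu_\ell=\sqrt{\ell(\ell+1)}$, then a Schur test --- is fine for each fixed $\ell$ and does identify the correct threshold $\nu_1=\sqrt{2}$, but the passage from sector bounds to an $L^{q_2}\to L^{s}$ bound for the full operator is precisely the hard step and cannot be dismissed as routine: orthogonality of spherical harmonics helps only at the $L^2$ level, the multiplicity of each $\ell$-sector grows, and off $L^2$ you need kernel bounds that are summable in $\ell$ (uniform constants with quantitative decay), which your sketch does not supply.

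Two further points. First, the Lorentz bookkeeping for the middle factor is off as stated: H\"older with $|x|^{-2}\in L^{3/2,\infty}$ gives $L^{q_1}\to L^{q_2,q_1}$, and since $q_1>q_2$ this is strictly weaker than $L^{q_2}$; you would have to run the whole chain in Lorentz spaces and recover the second index through the smoothing factors, which needs care (this is the framework of \cite{Mizutani}, not a one-line H\"older application). Second, your claim that pulling the constraints back through the chain ``reproduces exactly'' the four inequalities in \eqref{1030} is unverified and doubtful as stated: in the paper's proof the exponent $p$ is chosen \emph{differently in different regions} (near the origin versus near infinity), which is what decouples the conditions on $q$ from those on $s'$; a single factorization with one fixed choice of intermediate exponents does not obviously reproduce the full region, so this bookkeeping must actually be carried out rather than asserted.
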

This estimate allows us to improve the range of Bernstein estimates enjoyed by $H^{\perp}$ that are found in \cite{Mizutani}. Indeed, we obtain that $H^{\perp}$ obeys many of the useful Bernstein estimates enjoyed by the Laplacian. More precisely, we prove the following:

\begin{corollary}[Sharp Sobolev Embedding] \label{134}If $f:\mathbb{R}^3\to \C$ is Schwartz, then \begin{align}\|P^{\perp}(H+1)^{-1}f \|_{L_x^6(\mathbb{R}^3)}\lesssim ||f||_{L_x^{2}(\mathbb{R}^3)}.\end{align}
\end{corollary}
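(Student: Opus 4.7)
The plan is to reduce Corollary \ref{134} to Theorem \ref{1004} by a triangle inequality. Specifically, I would write
\begin{align*}
P^{\perp}(H+1)^{-1} = P^{\perp}\bigl[(H+1)^{-1} - (-\Delta+1)^{-1}\bigr] + P^{\perp}(-\Delta+1)^{-1},
\end{align*}
and bound each summand separately as an operator from $L_x^2(\mathbb{R}^3)$ to $L_x^6(\mathbb{R}^3)$.

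For the first term, I would verify that the pair $(q,s) = (2,6)$ satisfies the hypotheses of Theorem \ref{1004}. With $s' = 6/5$ we get $\tfrac{1}{q} + \tfrac{1}{s'} = \tfrac{1}{2} + \tfrac{5}{6} = \tfrac{4}{3}$, which is precisely the left endpoint of the allowed range. The four expressions in the second hypothesis evaluate to $2 - \tfrac{3}{s'} = -\tfrac{1}{2}$, $\ 2 - \tfrac{3}{q} = \tfrac{1}{2}$, $\ \tfrac{3}{q} - \tfrac{5}{2} = -1$, and $\ \tfrac{3}{s'} - \tfrac{5}{2} = 0$, whose maximum is $\tfrac{1}{2} < \sqrt{2}$. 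Thus Theorem \ref{1004} applies and directly yields the $L^2 \to L^6$ boundedness of the resolvent difference.

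For the second term, I would invoke the classical Sobolev embedding $H^2(\mathbb{R}^3) \hookrightarrow L^6(\mathbb{R}^3)$ together with the elementary fact that $(-\Delta+1)^{-1}$ is bounded from $L^2$ to $H^2$. To deal with the outer projection, I would verify that $P^{\perp}$ is bounded on $L^6$: since $P$ is a spherical average, Jensen's inequality gives that $P$ is a contraction on every $L^p$ with $1 \le p \le \infty$, so the same holds for $P^{\perp} = I - P$. Combining these observations yields $\|P^{\perp}(-\Delta+1)^{-1}f\|_{L^6} \lesssim \|f\|_{L^2}$.

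The substantive work is already packaged into Theorem \ref{1004}; the only remaining task is checking that the scaling indices line up and that the pieces match at the $L^2 \to L^6$ level. Consequently, there is no genuine obstacle beyond the resolvent-difference estimate itself, and the proof should amount to little more than the decomposition above plus standard embeddings.
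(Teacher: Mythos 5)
Your proposal is correct and follows essentially the same route as the paper: decompose $P^{\perp}(H+1)^{-1}$ into the free resolvent plus the difference, check that $(q,s)=(2,6)$ satisfies the hypotheses of Theorem \ref{1004} (with exactly the same numerical values you computed), and handle the free term by classical Sobolev embedding. Your extra remarks on the $L^p$-boundedness of $P^{\perp}$ and the $H^2\hookrightarrow L^6$ embedding merely flesh out a step the paper leaves implicit.
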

\begin{corollary}\label{1005}If $f:\mathbb{R}^3\to \C$ is Schwartz, then \begin{align} \|P^{\perp}(H+1)^{-1}f \|_{L_x^\infty(\mathbb{R}^3)}&\lesssim ||f||_{L_x^{2}(\mathbb{R}^3)}\\  \|P^{\perp}(H+1)^{-1}f \|_{L_x^2(\mathbb{R}^3)}&\lesssim ||f||_{L_x^{1}(\mathbb{R}^3)}.\end{align}
\end{corollary}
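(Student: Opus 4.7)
The natural approach is to write
\[P^{\perp}(H+1)^{-1} = P^{\perp}(-\Delta+1)^{-1} + P^{\perp}\bigl[(H+1)^{-1} - (-\Delta+1)^{-1}\bigr]\]
and treat the two summands separately. For the free resolvent summand, I would recall that in $\mathbb{R}^3$ the operator $(-\Delta+1)^{-1}$ is convolution with the Bessel kernel $G(x) = \frac{e^{-|x|}}{4\pi|x|}$, and a direct calculation in spherical coordinates shows $G \in L^2(\mathbb{R}^3)$. By Young's convolution inequality, $(-\Delta+1)^{-1}$ therefore maps $L^1 \to L^2$ boundedly; composing with the $L^2$-bounded projection $P^{\perp}$ handles this term.

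For the difference $P^{\perp}\bigl[(H+1)^{-1} - (-\Delta+1)^{-1}\bigr]$, apply Theorem \ref{1004} at the endpoint $(q,s) = (1,2)$. One checks that $\tfrac{1}{q}+\tfrac{1}{s'} = 1 + \tfrac{1}{2} = \tfrac{3}{2}$, which lies in $[\tfrac{4}{3},\tfrac{5}{3}]$, and that the four numbers in the second hypothesis reduce to $\{\tfrac{1}{2},\,-1,\,\tfrac{1}{2},\,-1\}$, whose maximum is $\tfrac{1}{2}<\sqrt{2}$. Adding this bound to the free resolvent bound produces the second inequality $\|P^{\perp}(H+1)^{-1}f\|_{L^2} \lesssim \|f\|_{L^1}$.

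To deduce the first inequality, I would argue by duality. The projection $P^{\perp}$, the resolvent $(H+1)^{-1}$, and the free resolvent $(-\Delta+1)^{-1}$ are each bounded and self-adjoint on $L^2(\mathbb{R}^3)$, and $P^{\perp}$ commutes with both resolvents (as noted in the introduction), so $P^{\perp}(H+1)^{-1}$ is self-adjoint. The $L^1\to L^2$ bound just established therefore dualizes, via $(L^1)^\ast = L^\infty$ and $(L^2)^\ast = L^2$, to the desired $L^2\to L^\infty$ bound. The main subtlety to watch for is that Theorem \ref{1004} is stated only for $s\in[1,\infty)$, so the $L^\infty$ endpoint cannot be obtained by a direct application of the theorem; instead it must be extracted from the $L^1\to L^2$ bound through this self-adjointness argument. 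Beyond this book-keeping, no genuine obstacle arises once Theorem \ref{1004} is in hand.
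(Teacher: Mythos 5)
Your proof is correct, and it uses the same decomposition and the same key tool as the paper: write $P^{\perp}(H+1)^{-1}=P^{\perp}(-\Delta+1)^{-1}+P^{\perp}[(H+1)^{-1}-(-\Delta+1)^{-1}]$, bound the free-resolvent piece by its explicit Bessel kernel, and bound the difference by Theorem \ref{1004}, with one of the two inequalities obtained from the other by self-adjointness and duality. The only real divergence is the order: the paper proves the $L^2\to L^\infty$ bound first, by invoking Theorem \ref{1004} at the pair $(q,s)=(2,\infty)$, and then obtains the $L^1\to L^2$ bound by duality; you instead prove the $L^1\to L^2$ bound first, applying Theorem \ref{1004} at $(q,s)=(1,2)$ (your verification $\tfrac1q+\tfrac1{s'}=\tfrac32$ and $\max\{\tfrac12,-1,\tfrac12,-1\}=\tfrac12<\sqrt2$ is right), and dualize to reach $L^2\to L^\infty$. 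Your route has a small advantage that you correctly identify: Theorem \ref{1004} as stated requires $s\in[1,\infty)$, so the paper's direct appeal to it at $s=\infty$ is formally outside the stated hypotheses (it is recoverable, since the proof of Theorem \ref{1004} handles $(q,s)\in[2,\infty)^2$ precisely by dualizing from $(s',q')\in[1,2]^2$, and $(s',q')=(1,2)$ is admissible, but your arrangement avoids having to make that remark). Your explicit Young's-inequality treatment of $P^{\perp}(-\Delta+1)^{-1}:L^1\to L^2$ via $\|e^{-|x|}/(4\pi|x|)\|_{L^2}<\infty$ likewise fills in a step the paper leaves implicit. So the two arguments are essentially the same proof read in opposite directions, with yours slightly more careful at the endpoint.
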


\begin{theorem}[Bernstein Estimates] \label{1006}The operator $P^{\perp}e^{-H}:L^1(\mathbb{R}^3)\to L^\infty(\mathbb{R}^3)$ is bounded.
\end{theorem}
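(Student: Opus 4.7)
The plan is to sandwich $e^{-H}$ between two copies of the resolvent $(H+1)^{-1}$, so that the mapping properties of $P^{\perp}(H+1)^{-1}$ established in Corollary \ref{1005} can be deployed at both ends. Concretely, since $P^{\perp}$ is an orthogonal projection that commutes with $H$ (and hence with $(H+1)^{-1}$ and $e^{-H}$), we have $P^{\perp}=(P^{\perp})^2$ and the operator identity
\[
P^{\perp} e^{-H} \;=\; \bigl[P^{\perp}(H+1)^{-1}\bigr]\,\bigl[(H+1)^2 e^{-H}\bigr]\,\bigl[(H+1)^{-1}P^{\perp}\bigr]
\]
holds on, say, Schwartz data; this is just the statement that the middle three factors of $H$-functional calculus telescope to $e^{-H}$, and that the two copies of $P^{\perp}$ can be absorbed into a single $P^{\perp}$. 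The goal is then to read the three factors as bounded operators $L^2 \to L^\infty$, $L^2 \to L^2$, and $L^1 \to L^2$ respectively, and compose.

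The two outer factors are controlled directly by Corollary \ref{1005}. Its first estimate yields $\|P^{\perp}(H+1)^{-1}\|_{L^2\to L^\infty}<\infty$; its second yields $\|P^{\perp}(H+1)^{-1}\|_{L^1\to L^2}<\infty$, and since $P^{\perp}$ commutes with $(H+1)^{-1}$ the latter bound transfers to $(H+1)^{-1}P^{\perp}$, handling the right-hand factor.

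For the middle factor $(H+1)^2 e^{-H}$, the plan is to invoke the functional calculus for the Friedrichs extension of $H$ on $L^2(\mathbb{R}^3)$, whose spectrum lies in $[0,\infty)$ by Lemma \ref{hardy}. The symbol $\phi(\lambda):=(1+\lambda)^2 e^{-\lambda}$ is continuous and bounded on $[0,\infty)$ (its maximum is $4/e$, attained at $\lambda=1$), so the spectral theorem gives $\|(H+1)^2 e^{-H}\|_{L^2\to L^2}\leq \|\phi\|_{L^\infty[0,\infty)}<\infty$. Composing the three pieces then routes $L^1\to L^2 \to L^2 \to L^\infty$, yielding the claim.

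The hard part of this proof is not in the composition above but has already been absorbed into the resolvent estimates of Theorem \ref{1004} and their corollaries; here the only substantive analytic input is the $L^2$-boundedness of $(H+1)^2 e^{-H}$, which is a routine consequence of spectral calculus for the nonnegative self-adjoint operator $H$.
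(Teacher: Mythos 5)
Your proposal is correct and follows essentially the same route as the paper: the same sandwich factorization $P^{\perp}e^{-H}=[P^{\perp}(H+1)^{-1}][(H+1)^2e^{-H}][(H+1)^{-1}P^{\perp}]$, with the two outer factors controlled by Corollary \ref{1005} and the middle factor bounded on $L^2$ by the spectral theorem applied to the bounded symbol $(1+\lambda)^2e^{-\lambda}$ on $[0,\infty)$. Your explicit remarks that $P^{\perp}$ commutes with the functional calculus of $H$ and that the symbol's supremum is $4/e$ are welcome details the paper leaves implicit, but the argument is the same.
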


\begin{remark} The estimates in Theorem \ref{1004}, Corollary \ref{1005}, and Theorem \ref{1006} fail in the absence of the projection $P^{\perp}$. 

\end{remark}

%

\section{Acknowledgements}

It is difficult for me to articulate just how much I appreciate the guidance, support, and friendship of my advisors, Rowan Killip and Monica Visan. I am incredibly grateful to them for introducing me to this problem, for their time in discussing it with me, and for carefully reading this manuscript. This work was supported in part by NSF grant DMS 1265868 (P.I. Rowan Killip) and NSF grant DMS-1500707 (P.I. Monica Visan).

\section{Preliminaries}

We begin by fixing some notation. We will write $X\lesssim Y$ if there exists a constant $C$ so that $X\le CY$. When we wish to stress the dependence of this implicit constant on a parameter $\ep$ (say), so that $C=C(\ep)$, we write $X\lesssim_{\ep}Y$. We write $X\sim Y$ if $X\lesssim Y$ and $Y\lesssim X$. If there exists a {\em small} constant $c$ for which $X\le cY$ we will write $X\ll Y$. 

For $1\le r<\infty$ we recall the Lebesgue space $L_x^r(\R^3)$, which is the completion of smooth compactly supported functions $f:\R^3\to \C$ under the norm \[||f||_{r}=||f||_{L_x^r(\R^3)}:=\left({\int_{\R^3}|f(x)|^r\ dx}\right)^{\frac{1}{r}}.\] When $r=\infty$, we employ the essential supremum norm. 
Our convention for the Fourier transform on $\R^3$ is \[\hat{f}(\xi)=\frac{1}{(2\pi)^\frac{3}{2}}\int_{\R^3}e^{-ix\cdot \xi}f(x)\ dx.\] The Fourier transform allows us to define the fractional differentiation operators \[\widehat{|\grad|^sf}(\xi)=|\xi|^s\hat{f}(\xi)\qquad \text{ and }\qquad \widehat{\langle\grad \rangle^sf}(\xi)=(1+|\xi|^2)^\frac{s}{2}\hat{f}(\xi).\] The fractional differentiation operators give rise to the (in-)homogenous Sobolev spaces. We define $\dot{H}^{1,r}(\R^3)$ and $H^{1,r}(\R^3)$ to be the completion of smooth compactly supported functions $f:\R^3\to \C$ under the norms \[||f||_{\dot{H}^{1,r}(\R^3)}=|||\grad | f||_{L_x^r}\qquad \text{ and }\qquad ||f||_{{H}^{1,r}(\R^3)}=|||\cyc{\grad} f||_{L_x^r}.\] When $r=2$, we simply write $H^{1,r}=H^{1}$ and $\dot{H}^{1,r}=\dot{H}^1$. 

Let us define the operator $H=-\Delta-\tfrac{1}{4|x|^2}$ as the Friedrichs extension of the symmetric and non-negative sesquilinear form \begin{align}Q(u,v)=\int \grad u\cdot \overline{\grad v}-\frac{1}{4|x|^2}u\bar{v}\ dx\qtq{$u,v\in C_{0}^{\infty}(\mathbb{R}^3).$}\end{align} More explicitly, let $\bar{Q}$ be the closure of $Q$ with domain $D(\bar{Q})$ given by the completion of $C_0^\infty(\mathbb{R}^3)$ with respect to the norm $(\|u\|_{L_x^2(\mathbb{R}^3)}^2+Q(u,u))^\frac{1}{2}$. For each element $u$ in \[D(H)=\{u\in D(\bar{Q}): |\bar{Q}(u,v)|\le C_{u}\|v\|_{L_x^2(\mathbb{R}^3)}\text{ for all }v\in D(\bar{Q})\},\] we define $Hu$ to be the unique element in $L_x^2(\mathbb{R}^3)$ satisfying $\bar{Q}(u,v)=(Hu,v)$ for all $v\in D(\bar{Q})$.

\section{Combes-Thomas Estimates}
In this section we prove a resolvent estimate for $H$, of which a special case is an essential stratagem in \cite{Mizutani}. To do this, we adopt a technique due to Combes and Thomas (see \cite{CombesThomas}, \cite{Hislop}) for proving decay estimates for the resolvent. 

\begin{lemma}[Combes-Thomas Estimate, \cite{Hislop}] \label{238} Suppose $A$ is a self-adjoint operator on a Hilbert space $\mathcal{H}$ and its spectrum, $\sigma(A)$, is positive and satisfies $d:=\text{dist}(\sigma(A),0)>0$. If $B$ is self-adjoint on $\mathcal{H}$, then $A+i\beta B$ is self adjoint for all $\beta\in \R$ and \[\|(A+i\beta B)^{-1}\|\le d^{-1}.\]
\begin{proof} If $u\in \mathcal{H}$, then \begin{align*}
\|u\| \|(A+i\beta B)u\|\ge \text{Re}\cyc{u, (A+i\beta B)u}=\text{Re}(\cyc{u,Au}-i\beta \cyc{u,Bu})\ge d\|u\|^2. 
\end{align*}

\end{proof}
\end{lemma}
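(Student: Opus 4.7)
The plan is to extract the bound $\|(A+i\beta B)^{-1}\| \le d^{-1}$ directly from a coercivity estimate on the associated quadratic form, bypassing any serious functional-analytic gymnastics. By the spectral theorem for the self-adjoint operator $A$, the assumption $\inf \sigma(A) \ge d$ translates to the quadratic lower bound $\cyc{u, Au} \ge d \|u\|^2$ for every $u$ in the form domain of $A$. Since $B$ is self-adjoint, the numerical range $\cyc{u, Bu}$ is real, so $\cyc{u, i\beta Bu} = i\beta \cyc{u,Bu}$ is purely imaginary. Adding, we get the key coercivity
\[
\text{Re}\,\cyc{u, (A+i\beta B)u} \;\ge\; d\,\|u\|^2
\]
valid for all $u$ in the common domain.

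The second step is a one-line Cauchy--Schwarz estimate: the real part of an inner product is bounded by the product of norms, so
\[
\|u\|\,\|(A+i\beta B)u\| \;\ge\; \text{Re}\,\cyc{u, (A+i\beta B)u} \;\ge\; d\,\|u\|^2,
\]
which on $u \neq 0$ yields the pointwise lower bound $\|(A+i\beta B)u\| \ge d\|u\|$. This immediately gives injectivity of $A+i\beta B$ together with closed range, and, on the range, a bounded left inverse of norm at most $d^{-1}$.

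The remaining task is to upgrade this to invertibility on all of $\mathcal{H}$. Here I would use that the formal adjoint of $A+i\beta B$ is $A - i\beta B$, to which exactly the same argument applies with $\beta$ replaced by $-\beta$. The resulting injectivity of the adjoint forces the range of $A+i\beta B$ to be dense, and a dense closed subspace is all of $\mathcal{H}$. Once surjectivity is in hand, the lower bound $\|(A+i\beta B)u\| \ge d\|u\|$ rephrases as $\|(A+i\beta B)^{-1} v\| \le d^{-1}\|v\|$ for every $v \in \mathcal{H}$, which is exactly the asserted resolvent estimate.

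The only genuinely delicate point is the issue of operator domains. The statement that $A + i\beta B$ is self-adjoint is slightly stronger than is warranted in full generality: one typically needs $B$ bounded (so that $i\beta B$ is a bounded perturbation and the domain question is trivial) or $B$ relatively $A$-bounded with small bound (so that the Kato--Rellich machinery applies). In the concrete applications to $H = -\Delta - \tfrac{1}{4|x|^2}$ that this lemma is designed for, $B$ will be a bounded multiplication operator arising from a suitable conjugation, so closedness of the sum is automatic and the coercivity computation above is the entire content of the proof.
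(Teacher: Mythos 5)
Your proof is correct and rests on exactly the same computation as the paper's: the coercivity bound $\mathrm{Re}\,\cyc{u,(A+i\beta B)u}\ge d\|u\|^2$ combined with Cauchy--Schwarz, giving $\|(A+i\beta B)u\|\ge d\|u\|$. In fact you go a bit further than the paper, which stops at this lower bound; your adjoint argument for surjectivity and your remark that self-adjointness of $A+i\beta B$ really requires $B$ bounded (or relatively $A$-bounded) fill in details the paper leaves implicit, and they are accurate for the intended application.
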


To use Lemma \ref{238} we will need a useful reformulation of Lemma \ref{sharphardy}.

\begin{corollary}If $f:\mathbb{R}^3\to \C$ is Schwartz and $P^{\perp}f=f$, then \begin{align}\label{819}\cyc{(H+\tfrac{8}{9}\Delta)f,f}\ge 0\end{align}and, likewise,  
\begin{align}\label{820}\cyc{(H-\tfrac{2}{|x|^2})f,f}\ge 0.\end{align}
\begin{proof} Suppose $f$ is Schwartz. As $P^{\perp}H=HP^{\perp}$, by \eqref{sharphardy} we see that \begin{align*}
\cyc{(H+\tfrac{8}{9}\Delta)f,f}_{L_x^2(\mathbb{R}^3)}&=\cyc{-\Delta f,f}_{L_x^2(\mathbb{R}^3)}-\cyc{\tfrac{1}{4|x|^2}f,f}_{L_x^2(\mathbb{R}^3)}+\cyc{\tfrac{8}{9}\Delta f,f}_{L_x^2(\mathbb{R}^3)}\\ &=\tfrac{1}{9}(\cyc{\grad f, \grad f}_{L_x^2(\mathbb{R}^3)}-\cyc{\tfrac{9}{4|x|^2}f,f}_{L_x^2(\mathbb{R}^3)})\\ &\ge 0.
\end{align*} Similarly, we see that 
\begin{align*}
\cyc{(H-\tfrac{2}{|x|^2})f,f}_{L_x^2(\mathbb{R}^3)}=\cyc{-\Delta f,f}_{L_x^2(\mathbb{R}^3)}-\cyc{\tfrac{9}{4|x|^2}f,f}_{L_x^2(\mathbb{R}^3)} &\ge 0.
\end{align*}
\end{proof}
\end{corollary}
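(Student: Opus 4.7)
The plan is straightforward: unpack the definition $H = -\Delta - \tfrac{1}{4|x|^2}$ and observe that both quadratic forms in the statement are positive scalar multiples of the deficit in the improved Hardy inequality, so each reduces to a direct application of Lemma \ref{sharphardy}.

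For the first inequality, I would first compute algebraically
\begin{align*}
H + \tfrac{8}{9}\Delta = -\Delta - \tfrac{1}{4|x|^2} + \tfrac{8}{9}\Delta = -\tfrac{1}{9}\Delta - \tfrac{1}{4|x|^2} = \tfrac{1}{9}\bigl(-\Delta - \tfrac{9}{4|x|^2}\bigr).
\end{align*}
Testing against $f$ and integrating by parts (legitimate since $f$ is Schwartz so all boundary terms vanish), one obtains
\begin{align*}
\cyc{(H + \tfrac{8}{9}\Delta) f, f} = \tfrac{1}{9}\Bigl(\|\grad f\|_{L^2}^2 - \tfrac{9}{4}\int \tfrac{|f|^2}{|x|^2}\,dx\Bigr),
\end{align*}
which is nonnegative by the hypothesis $P^{\perp}f = f$ together with Lemma \ref{sharphardy}.

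For the second inequality, an identical maneuver gives $H - \tfrac{2}{|x|^2} = -\Delta - \tfrac{9}{4|x|^2}$, and the quadratic form $\cyc{(H - \tfrac{2}{|x|^2}) f, f}$ is again exactly the improved-Hardy deficit (without the $\tfrac{1}{9}$ factor), which is nonnegative by the same lemma.

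There is no real obstacle: the corollary is essentially a repackaging of the improved Hardy inequality into two algebraic forms that will be convenient in the subsequent Combes--Thomas argument. The only point that needs any care is tracking the hypothesis $P^{\perp}f = f$ through the computation, but since $P^{\perp}$ commutes with $H$ and with $|x|^{-2}$, this is automatic.
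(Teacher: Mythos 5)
Your proof is correct and follows essentially the same route as the paper: rewrite $H+\tfrac{8}{9}\Delta$ (respectively $H-\tfrac{2}{|x|^2}$) as $\tfrac{1}{9}(-\Delta-\tfrac{9}{4|x|^2})$ (respectively $-\Delta-\tfrac{9}{4|x|^2}$), integrate by parts, and invoke the improved Hardy inequality of Lemma \ref{sharphardy} using $P^{\perp}f=f$. No gaps.
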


\begin{theorem}[Combes-Thomas Estimates]\label{CombesThomas}For $p\in \R$, define \begin{align}
\label{857}A&:=1-p^2\left\{(H^{\perp}+1)^{-\tfrac{1}{2}}\tfrac{1}{|x|^2}(H^{\perp}+1)^{-\tfrac{1}{2}}\right\},\qtq{and}\\ 
\label{858}B&:=-ip(H^{\perp}+1)^{-\tfrac{1}{2}}\left\{\tfrac{x}{|x|^2}\cdot \grad+\grad \cdot \tfrac{x}{|x|^2}\right\}(H^{\perp}+1)^{-\tfrac{1}{2}}.
\end{align}If $|p|<\sqrt{2}$, then $A+iB$ is invertible. 

\begin{proof} First, note that $A$ and $B$ are self-adjoint and $\sigma(A)=\sigma(A)\cap \mathbb{R}^{+}$. By Lemma~\ref{238}, $A+iB$ is invertible provided that $d=\text{dist}(\sigma(A),0)>0$. To determine the range of $p$ for which $d>0$, it behooves us to compute
\[\|(H^{\perp}+1)^{-\tfrac{1}{2}}\tfrac{1}{|x|^2}(H^{\perp}+1)^{-\tfrac{1}{2}}\|_{L_x^2(\mathbb{R}^3)\to L_x^2(\mathbb{R}^3)}.\]
Towards this end we note that, by \eqref{820}, \[|x|(1+H^{\perp})^{\frac{1}{2}}(1+H^{\perp})^{\frac{1}{2}}|x|=|x|(1+H^{\perp})|x|\ge |x|(1+\tfrac{2}{|x|^2})|x|=2+|x|^2.\] It follows, then, that 
\begin{align}\label{1152}\|(H^{\perp}+1)^{-\frac{1}{2}}|x|^{-2}(H^{\perp}+1)^{-\frac{1}{2}}\|_{L_x^2(\mathbb{R}^3)\to L_x^2(\mathbb{R}^3)}\le \tfrac{1}{2}.\end{align} By a $TT^*$ argument, it follows that \begin{align}\label{905}
\||x|^{-1}(H^{\perp}+1)^{-\frac{1}{2}}\|_{L_x^2(\mathbb{R}^3)\to L_x^2(\mathbb{R}^3)}=\|(H^{\perp}+1)^{-\frac{1}{2}}|x|^{-1}\|_{L_x^2(\mathbb{R}^3)\to L_x^2(\mathbb{R}^3)}\le \tfrac{1}{\sqrt{2}}.
\end{align}Consequently, \begin{align}\label{1153}\|(H^{\perp}+1)^{-\frac{1}{2}}\tfrac{x}{|x|^2}\|_{L_x^2(\mathbb{R}^3)\to L_x^2(\mathbb{R}^3)}\le \tfrac{1}{\sqrt{2}}.\end{align}Thus, $d\ge 1-\frac{p^2}{2}$ so $d>0$ provided $|p|<\sqrt{2}$. 
%
%
It follows that $A+iB$ is invertible provided that $|p|< \sqrt{2}$, as desired.\end{proof}
\end{theorem}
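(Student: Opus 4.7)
The plan is to reduce to the abstract Combes-Thomas bound of Lemma~\ref{238}: once $A$ and $B$ are shown to be self-adjoint and $d := \mathrm{dist}(\sigma(A),0)$ is positive, we get $(A+iB)^{-1}$ bounded with norm at most $d^{-1}$. So the theorem reduces to establishing self-adjointness of $A$ and $B$, and a positive lower bound on $\sigma(A)$ valid exactly when $|p|<\sqrt{2}$.

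Self-adjointness of $A$ is immediate once the sandwich $(H^\perp+1)^{-1/2}\,|x|^{-2}\,(H^\perp+1)^{-1/2}$ is seen to be a bounded positive operator on $L^2$: then $A$ is simply $1$ minus a bounded positive self-adjoint operator. For $B$, the first-order expression $D := \tfrac{x}{|x|^2}\cdot\nabla + \nabla\cdot\tfrac{x}{|x|^2}$ is formally skew-adjoint because $\nabla^* = -\nabla$, so $-ipD$ is symmetric for real $p$, and conjugation by the self-adjoint $(H^\perp+1)^{-1/2}$ produces a bounded self-adjoint $B$, with boundedness inherited from the same Hardy-type control on $|x|^{-1}(H^\perp+1)^{-1/2}$.

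The heart of the argument is then a single norm estimate,
\begin{align*}
\bigl\|(H^\perp+1)^{-\tfrac{1}{2}}\,|x|^{-2}\,(H^\perp+1)^{-\tfrac{1}{2}}\bigr\|_{L^2 \to L^2} \le \tfrac{1}{2},
\end{align*}
which places $\sigma(A)\subset[1-p^2/2,\infty)$ and hence gives $d \ge 1-p^2/2 > 0$ precisely when $|p|<\sqrt{2}$. To prove this bound I would start from the packaged improved Hardy inequality~\eqref{820}, $H^\perp \ge 2|x|^{-2}$ on $\mathrm{ran}(P^\perp)$, rewritten as $\langle(1+H^\perp)g,g\rangle \ge 2\,\||x|^{-1}g\|_{L^2}^2$ for $g\in P^\perp D(\bar Q)$. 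Taking $g = (1+H^\perp)^{-1/2}f$ yields $\||x|^{-1}(1+H^\perp)^{-1/2}\|_{L^2\to L^2}\le 1/\sqrt{2}$; a $TT^*$ identity then converts this into the displayed sandwich bound of $1/2$.

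The main obstacle I anticipate is keeping the form-theoretic manipulations rigorous: the operators $|x|^{\pm 1}$ are unbounded, and any inversion of an operator inequality requires careful domain considerations. The cleanest route is to stay on the quadratic-form side, applying \eqref{820} directly to $g = (1+H^\perp)^{-1/2}f$, and to appeal to duality/$TT^*$ only at the last moment to convert the Hardy-type bound into the sandwich estimate. Once the $1/2$ bound is in hand, Lemma~\ref{238} finishes the theorem without further effort.
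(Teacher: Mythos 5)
Your proposal is correct and follows essentially the same route as the paper: self-adjointness of $A$ and $B$, the improved Hardy bound \eqref{820} yielding $\|(H^{\perp}+1)^{-\frac{1}{2}}|x|^{-2}(H^{\perp}+1)^{-\frac{1}{2}}\|\le\tfrac12$, hence $d\ge 1-\tfrac{p^2}{2}>0$, and then Lemma~\ref{238}. The only difference is cosmetic — you derive $\||x|^{-1}(H^{\perp}+1)^{-\frac{1}{2}}\|\le\tfrac{1}{\sqrt2}$ first on the form side and then apply $TT^*$, whereas the paper gets the sandwich bound first and deduces the single-weight bound; your ordering is, if anything, slightly cleaner about domains.
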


\begin{corollary}[Weighted Resolvent Estimate]
\label{ResolventEst} The operator \[|x|^{1-p}(H^{\perp}+1)|x|^{1+p}:L_x^2(\mathbb{R}^3)\to L_x^2(\mathbb{R}^3)\] is bounded for all $p\in (-\sqrt{2},\sqrt{2})$. 

\end{corollary}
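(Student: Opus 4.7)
The plan is to invert the operator $A+iB$ produced by Theorem~\ref{CombesThomas} and then sandwich the resulting bounded operator with $|x|^{-1}(H^{\perp}+1)^{-1/2}$, which has norm at most $1/\sqrt{2}$ by~\eqref{905}. The weighted resolvent bound emerges once the $(H^{\perp}+1)^{\pm 1/2}$ factors telescope.

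The first step is to verify the operator identity
\[
A+iB \;=\; (H^{\perp}+1)^{-1/2}\,|x|^{p}\,(H^{\perp}+1)\,|x|^{-p}\,(H^{\perp}+1)^{-1/2}
\]
on a dense subspace, say $P^{\perp}C_{0}^{\infty}(\mathbb{R}^{3}\setminus\{0\})$. The product-rule expansion gives
\[
|x|^{p}(-\Delta)|x|^{-p} \;=\; -\Delta + 2p\,\tfrac{x}{|x|^{2}}\cdot\nabla + \tfrac{p}{|x|^{2}} - \tfrac{p^{2}}{|x|^{2}},
\]
and the commutator relation $[\nabla,\tfrac{x}{|x|^{2}}] = \tfrac{1}{|x|^{2}}$ lets one rewrite the first-order piece $2p\,\tfrac{x}{|x|^{2}}\cdot\nabla + \tfrac{p}{|x|^{2}}$ symmetrically as $p\bigl[\tfrac{x}{|x|^{2}}\cdot\nabla + \nabla\cdot\tfrac{x}{|x|^{2}}\bigr]$. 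Conjugating the resulting expression by $(H^{\perp}+1)^{-1/2}$ then reproduces the definitions of $A$ and $iB$ in \eqref{857}--\eqref{858} term by term.

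By Theorem~\ref{CombesThomas}, $A+iB$ is invertible on $L^{2}$, so its inverse---formally $(H^{\perp}+1)^{1/2}|x|^{p}(H^{\perp}+1)^{-1}|x|^{-p}(H^{\perp}+1)^{1/2}$---is a bounded operator. Composing on the left by $|x|^{-1}(H^{\perp}+1)^{-1/2}$ and on the right by its adjoint $(H^{\perp}+1)^{-1/2}|x|^{-1}$, each of norm $\le 1/\sqrt{2}$ via \eqref{905}, causes the $(H^{\perp}+1)^{\pm 1/2}$ factors to cancel. What remains is
\[
|x|^{-1+p}(H^{\perp}+1)^{-1}|x|^{-1-p}\colon L^{2}(\mathbb{R}^{3})\to L^{2}(\mathbb{R}^{3})
\]
bounded for $|p|<\sqrt 2$. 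Replacing $p$ by $-p$ and taking the formal inverse (which amounts to undoing each weight and inverting the resolvent) returns the statement of the corollary.

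The main obstacle is that the chain of identities is at the outset only formal: $|x|^{\pm p}$ and $H^{\perp}$ are both unbounded, so the computation of $A+iB$ must be carried out classically on a dense core and then extended by density, leveraging the fact that the explicit expressions defining $A$ and $B$ are bounded operators on $L^{2}$ thanks to~\eqref{1152} and~\eqref{1153}. The product-rule bookkeeping and the integration-by-parts needed to symmetrize $\tfrac{x}{|x|^{2}}\cdot\nabla$ are routine but must be tracked carefully to see the stray multiplicative terms assemble into the precise form of $A$.
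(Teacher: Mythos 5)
Your argument is essentially the paper's own proof: both rest on the conjugation identity $|x|^{1\mp p}(H^{\perp}+1)|x|^{1\pm p}=|x|(H^{\perp}+1)^{1/2}\{A+iB\}(H^{\perp}+1)^{1/2}|x|$, the invertibility of $A+iB$ from Theorem \ref{CombesThomas}, and the bound \eqref{905} on $|x|^{-1}(H^{\perp}+1)^{-1/2}$ and its adjoint; you merely carry out the product-rule verification of the identity that the paper asserts, and (correctly, up to the harmless sign $p\mapsto -p$) read the conclusion as boundedness of $|x|^{-1\pm p}(H^{\perp}+1)^{-1}|x|^{-1\mp p}$, which is exactly how Corollary \ref{ResolventEst} is invoked in Corollary \ref{1038}.
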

\begin{proof}
In the notation of Lemma \ref{CombesThomas}, with $A$ as defined in \eqref{857} and $B$ as defined in \eqref{858}, we see that 
\begin{align}
|x|^{1-p}(H^{\perp}+1)|x|^{1+p}&=|x|(H^{\perp}+1)^{\frac{1}{2}}\left\{A+iB\right\}(H^{\perp}+1)^{\frac{1}{2}}|x|.
\end{align}
By \eqref{905}, the operators $(H^{\perp}+1)^{\frac{1}{2}}|x|$ and $|x|(H^{\perp}+1)^{\frac{1}{2}}$ are boundedly invertible. By Lemma \ref{CombesThomas}, $A+iB$ is boundedly invertible provided that $p\in (-\sqrt{2},\sqrt{2})$. Thus, $|x|^{1-p}(H^{\perp}+1)|x|^{1+p}$ is bounded provided that $p\in (-\sqrt{2},\sqrt{2})$, as claimed. \end{proof}

\section{Kernel Estimates}
In this section we prove kernel estimates for the operator $P^{\perp}(-\Delta+1)^{-1}$. 

\begin{lemma}Fix $y\in \R^3$ and suppose $x\in \R^3$. If $2|x|<|y|$, then \begin{align}\label{MVTBound1}|P^{\perp}(|x|^{-1\pm p}(-\Delta+1)^{-1}\delta_y)|\lesssim |x|^{\pm p}|y|^{-2}e^{-|y|/4}.\end{align} If $2|y|<|x|$, then 
\begin{align}\label{MVTBound2}|P^{\perp}(|x|^{-1\pm p}(-\Delta+1)^{-1}\delta_y)|\lesssim |y||x|^{-3\pm p}e^{-|x|/4}.\end{align}
\begin{proof}As $|x|^{-1\pm p}$ is radial it commutes with $P^{\perp}$ and so \begin{align}\label{MVTWant}|P^{\perp}(|x|^{-1\pm p}(-\Delta+1)^{-1}\delta_y)|=|x|^{-1\pm p}\cdot |P^{\perp}((-\Delta+1)^{-1}\delta_y)|.\end{align}
It suffices, then, to bound \begin{align}P^{\perp}((-\Delta+1)^{-1}\delta_y)(x)=\int_{\mathbb{S}^2}\frac{e^{-|x-y|}}{|x-y|}-\frac{e^{-|\omega|x|-y|}}{|\omega |x|-y|}d\sigma(\omega),\end{align}where $d\sigma$ denotes the uniform probability measure on the unit sphere $\mathbb{S}^{2}$. By the mean value theorem, for each $\omega\in \mathbb{S}^{2}$ there exists some $c(\omega)$ between $|x-y|$ and $|\omega|x|-y|$ so that 
\begin{align}\left|\tfrac{e^{-|x-y|}}{|x-y|}-\tfrac{e^{-|\omega|x|-y|}}{|\omega |x|-y|}\right|&=\left|-e^{-c(\omega)}\left(\tfrac{c(\omega)+1}{c(\omega)^2}\right)(|x-y|-|\omega|x|-y|)\right|\\&\le e^{-c(\omega)}\left(\tfrac{c(\omega)+1}{c(\omega)^2}\right)\tfrac{2|x-\omega|x||\cdot |y|}{|x-y|+|\omega|x|-y|}.\label{MVTbound}
\end{align}
Suppose for now that $2|x|<|y|$. It follows, then, that 
\[c(\omega)\ge \min\{|x-y|,|\omega|x|-y|\}\ge ||x|-|y||>\tfrac{|y|}{2}.\]
Since the function $t\mapsto e^{-t}\left(\tfrac{t+1}{t^2}\right)$ is decreasing, we see that \begin{align}\label{cbound}e^{-c(\omega)}\left(\frac{c(\omega)+1}{c(\omega)^2}\right)\le e^{-|y|/2}\left(\tfrac{2|y|+4}{|y|^{2}}\right)\lesssim \tfrac{e^{-|y|/4}}{|y|^2}.\end{align} Moreover, we see that \begin{align}\label{CSbound}\tfrac{2|x-\omega|x||\cdot |y|}{|x-y|+|\omega|x|-y|}\le \tfrac{4|x||y|}{|y|}\le 4|x|.\end{align} Combining \eqref{MVTbound}, $\eqref{cbound}$, and $\eqref{CSbound}$ we get
\begin{align}\label{improvedMVT}\left|\tfrac{e^{-|x-y|}}{|x-y|}-\tfrac{e^{-|\omega|x|-y|}}{|\omega |x|-y|}\right|\lesssim |x||y|^{-2}e^{-|y|/4}.\end{align} Integrating over the sphere, we combine \eqref{improvedMVT} with \eqref{MVTWant} to deduce \eqref{MVTBound1}. 

If instead $2|y|<|x|$, then the left hand side of \eqref{improvedMVT} is bounded by the right hand side of \eqref{improvedMVT} with $|y|$ replaced with $|x|$ and \emph{vice versa}. This gives \eqref{MVTBound2}.
\end{proof}
\end{lemma}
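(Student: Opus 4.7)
The plan is to reduce to a pointwise estimate on the Yukawa kernel of $(-\Delta+1)^{-1}$ after projecting onto angular functions in the $x$-variable. First I would observe that, since $|x|^{-1\pm p}$ is radial, it commutes with $P^{\perp}$, so that
\[|P^{\perp}(|x|^{-1\pm p}(-\Delta+1)^{-1}\delta_y)(x)| = |x|^{-1\pm p}\cdot|P^{\perp}((-\Delta+1)^{-1}\delta_y)(x)|.\]
The Green's function for $-\Delta+1$ on $\R^3$ is a constant multiple of $e^{-|x-y|}/|x-y|$, and applying $P^{\perp}=I-P$ recasts the remaining factor as the sphere-averaged difference
\[\int_{\mathbb{S}^2}\left[\tfrac{e^{-|x-y|}}{|x-y|} - \tfrac{e^{-|\,|x|\omega-y|}}{|\,|x|\omega-y|}\right]d\sigma(\omega),\]
where $d\sigma$ is the uniform probability measure on $\mathbb{S}^{2}$. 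It then suffices to dominate the integrand pointwise by $|x||y|^{-2}e^{-|y|/4}$ in the first regime and by $|y||x|^{-2}e^{-|x|/4}$ in the second, since integration against a probability measure preserves the bound and reinstating $|x|^{-1\pm p}$ recovers the two claims.

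To control the integrand I would apply the mean value theorem to $\phi(t)=e^{-t}/t$, whose derivative satisfies $|\phi'(t)|=e^{-t}(t+1)/t^{2}$. This produces an intermediate point $c(\omega)$ between $|x-y|$ and $|\,|x|\omega-y|$ together with the factor $\big||x-y|-|\,|x|\omega-y|\big|$. For the norm difference I would write $a-b=(a^{2}-b^{2})/(a+b)$, compute directly that $|x-y|^{2}-|\,|x|\omega-y|^{2}=2y\cdot(|x|\omega-x)$, and apply Cauchy--Schwarz to obtain
\[\big||x-y|-|\,|x|\omega-y|\big| \le \frac{2\,|x-|x|\omega|\cdot|y|}{|x-y|+|\,|x|\omega-y|}.\]

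With these tools assembled I would split into the two regimes. In both cases $|x-y|$ and $|\,|x|\omega-y|$ are at least $\big||x|-|y|\big|$, so $c(\omega)>|y|/2$ when $2|x|<|y|$ and $c(\omega)>|x|/2$ when $2|y|<|x|$. Monotonicity of $|\phi'|$ on $(0,\infty)$ then yields $|\phi'(c(\omega))|\lesssim |y|^{-2}e^{-|y|/4}$ and $|x|^{-2}e^{-|x|/4}$ respectively, after absorbing the polynomial prefactor $(t+1)$ into half of the exponential decay via the uniform bound $(t+1)e^{-t/2}\lesssim e^{-t/4}$ on $t>0$. For the remaining quotient, the triangle inequality always gives $|x-|x|\omega|\le 2|x|$; in the first regime the denominator satisfies $|x-y|+|\,|x|\omega-y|\ge|y|$, so the quotient is $\le 4|x|$; in the second regime the denominator is $\ge|x|$, so the quotient is $\le 4|y|$. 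Multiplying the two factors yields the two integrand bounds.

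The main obstacle is keeping track of which factor to spend in which regime: the numerator $|x-|x|\omega|\cdot|y|$ plays a symmetric role, but to land the correct final dependence one must pair it against the denominator so that the surviving factor is $|x|$ near the origin and $|y|$ far from the origin. The other small nuisance is that the absorption $(t+1)e^{-t/2}\lesssim e^{-t/4}$ must hold uniformly in $t>0$, not merely for $t$ large, but this is immediate since the left-hand side is bounded on any compact neighborhood of $0$ and decays exponentially at infinity.
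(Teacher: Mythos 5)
Your proposal is correct and follows essentially the same route as the paper: commute the radial weight past $P^{\perp}$, write $P^{\perp}$ of the Yukawa kernel as a sphere-averaged difference, apply the mean value theorem to $e^{-t}/t$ together with the difference-of-squares/Cauchy--Schwarz bound on $\bigl||x-y|-|\,|x|\omega-y|\bigr|$, and then run the two regimes $2|x|<|y|$ and $2|y|<|x|$ using monotonicity of $e^{-t}(t+1)/t^{2}$. Your treatment of the second regime just spells out the symmetric estimate that the paper dispatches with ``vice versa.''
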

\begin{lemma} \label{kernelbounds1} If $|p|<\tfrac{3}{2}$ and $y\in \R^3$, then  \[\|P^{\perp}(|x|^{-1\pm p}(-\Delta+1)^{-1}\delta_{y})\|_{L_x^2(\R^2)}\lesssim |y|^{\pm p}\min\{|y|^{-\frac{1}{2}},|y|^{-1}\}.\]
\begin{proof}Fix some $y\in \R^3$. We split the integral into three regions \begin{align}
\left(\int_{A}+\int_{B}+\int_{C}\right)|P^{\perp}(|x|^{-1\pm p}(-\Delta+1)^{-1}\delta_{y})|^{2}\ dx=I_1+I_2+I_3,
\end{align} where $A=\{2|x|<|y|\}$, $B=\{|x|>2|y|\}$, and $C=\{|y|/2\le |x|\le 2|y|\}$. On the region $A$, we employ $\eqref{MVTBound1}$ to see that \begin{align*}
\int_{A}|P^{\perp}(|x|^{-1\pm p}(-\Delta+1)^{-1}\delta_{y})|^{2}\ dx&\lesssim |y|^{-4}e^{-|y|/2}\int_{A}|x|^{\pm 2p}\ dx\\ &\lesssim  e^{-|y|/2}|y|^{-1\pm 2p}\\ &\lesssim |y|^{\pm 2p}(|y|^{-2}\wedge |y|^{-1}),
\end{align*} provided that $|p|<\frac{3}{2}$. 

On the region $B$, we employ $\eqref{MVTBound2}$ to see that \begin{align*}
\int_{B}|P^{\perp}(|x|^{-1\pm p}(-\Delta+1)^{-1}\delta_{y})|^{2}\ dx&\lesssim |y|^{2}\int_{B}|x|^{-6\pm 2p}e^{-|x|/2}\ dx\\ &\lesssim |y|^{2}e^{-|y|}\int_{2|y|}^{\infty}r^{-4\pm 2p}\ dr\\ &\lesssim |y|^{-1\pm 2p}e^{-|y|}\\ &\lesssim |y|^{\pm 2p}(|y|^{-2}\wedge |y|^{-1}),
\end{align*}provided that $|p|<\tfrac{3}{2}$. 

Since $P^{\perp}:L_x^2(C)\to L_x^2(C)$ is bounded, we see that 
\begin{align}
\int_{C}|P^{\perp}(|x|^{-1\pm p}(-\Delta+1)^{-1}\delta_{y})|^{2}\ dx&\lesssim \int_{C}|x|^{-2\pm 2p}|(-\Delta+1)^{-1}\delta_{y})|^{2}\ dx\\ &\lesssim \int_{C}|x|^{-2\pm 2p}\tfrac{e^{-2|x-y|}}{|x-y|^2} dx\\ &\lesssim |y|^{-2\pm 2p}\int_{C}\tfrac{e^{-2|x-y|}}{|x-y|^2} \ dx.
\end{align} We may further split $C=C_1\cup C_2$ where $C_1=\{|x-y|<\tfrac{|y|}{2}\}$ and $C_2=C\setminus C_1$. On $C_1$ we see by a change of coordinates that \begin{align}\int_{C_1}\frac{e^{-2|x-y|}}{|x-y|^2}\ dx\lesssim \int_{0}^{\frac{|y|}{2}}e^{-2r}\ dr=1-e^{-|y|}\lesssim (1 \wedge |y|).\end{align} On $C_2$ we know that $|x-y|\ge \frac{|y|}{2}$ and so \begin{align}\int_{C_2}\frac{e^{-2|x-y|}}{|x-y|^2}\ dx \lesssim \frac{e^{-|y|}}{|y|^2}\int_{C_2}1 \ dx\lesssim |y|e^{-|y|}\lesssim (1\wedge |y|).\end{align} It follows, then, that \[\int_{C}|P^{\perp}(|x|^{-1\pm p}(-\Delta+1)^{-1}\delta_{y})|^{2}\ dx\lesssim |y|^{\pm 2p}(|y|^{-2}\wedge |y|^{-1}).\] Summing over the regions, we have our desired bound. \end{proof}
\end{lemma}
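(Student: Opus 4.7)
The plan is to decompose $\R^3$ into three regions, dictated by the size of $|x|$ relative to $|y|$: an inner region $A=\{2|x|<|y|\}$, an outer region $B=\{|x|>2|y|\}$, and a transition annulus $C=\{|y|/2\le |x|\le 2|y|\}$ containing the source point. On $A$ and $B$ we exploit the pointwise bounds \eqref{MVTBound1} and \eqref{MVTBound2} that were just established using the cancellation from $P^{\perp}$. On $C$, however, $|x|$ and $|y|$ are comparable, so the mean value theorem gain from the previous lemma no longer helps; here we must fall back on $L_x^2$-boundedness of $P^{\perp}$ together with direct estimates for the free resolvent kernel $e^{-|x-y|}/|x-y|$.

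On $A$, squaring \eqref{MVTBound1} and integrating gives roughly $|y|^{-4}e^{-|y|/2}\int_{|x|\le |y|/2}|x|^{\pm 2p}\,dx$, and the inner integral equals a constant times $|y|^{3\pm 2p}$ provided $|p|<\tfrac{3}{2}$. The whole contribution is then $|y|^{-1\pm 2p}e^{-|y|/2}$, which is dominated by $|y|^{\pm 2p}(|y|^{-2}\wedge |y|^{-1})$: for small $|y|$ the polynomial matches, while for large $|y|$ the exponential beats any polynomial loss. On $B$, squaring \eqref{MVTBound2} produces $|y|^2\int_{|x|>2|y|}|x|^{-6\pm 2p}e^{-|x|/2}\,dx$; passing to polar coordinates and factoring out $e^{-|y|/2}$ from the remaining exponential reduces this to $|y|^2 e^{-|y|}\int_{2|y|}^\infty r^{-4\pm 2p}\,dr$, which is again $\lesssim |y|^{\pm 2p}(|y|^{-2}\wedge |y|^{-1})$ provided $|p|<\tfrac{3}{2}$.

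The main work is in the annulus $C$. There $|x|\sim |y|$, and $P^{\perp}$ is bounded on $L_x^2(C)$, so the contribution of $C$ is dominated by $|y|^{-2\pm 2p}\int_{C}e^{-2|x-y|}/|x-y|^2\,dx$. Split $C=C_1\cup C_2$ with $C_1=\{|x-y|<|y|/2\}$: on $C_1$, polar coordinates centered at $y$ cancel the $|x-y|^{-2}$ singularity and yield $1-e^{-|y|}\lesssim 1\wedge |y|$; on $C_2$ the kernel is uniformly bounded by $e^{-|y|}/|y|^2$, and multiplying by $\mathrm{vol}(C_2)\lesssim |y|^3$ gives $|y|e^{-|y|}\lesssim 1\wedge |y|$. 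The combined contribution over $C$ is therefore $|y|^{-2\pm 2p}(1\wedge |y|)=|y|^{\pm 2p}(|y|^{-2}\wedge |y|^{-1})$, matching the target after squaring the asserted $\min\{|y|^{-1/2},|y|^{-1}\}$. Summing the three regions closes the estimate.

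The main obstacle is the annulus $C$: the angular cancellation from $P^{\perp}$ provides no help once $|x|$ and $|y|$ are comparable, so all the work must be done by local integrability of the free resolvent in three dimensions, and one must track both regimes of $|y|$ simultaneously to recover both branches of the $\min$. The $|p|<\tfrac{3}{2}$ restriction is already visible in region $A$, where $|x|^{\pm 2p}$ must be integrable near the origin in dimension three.
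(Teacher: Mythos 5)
Your proposal is correct and follows essentially the same route as the paper: the identical three-region split $A$, $B$, $C$, the pointwise bounds \eqref{MVTBound1} and \eqref{MVTBound2} on $A$ and $B$, and on the comparable annulus $C$ the $L_x^2$-boundedness of $P^{\perp}$ combined with the explicit free resolvent kernel and the further split $C=C_1\cup C_2$. The region-by-region estimates, including the role of $|p|<\tfrac{3}{2}$ and the recovery of both branches of the minimum, match the paper's proof.
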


\begin{corollary}\label{1038}The integral kernel of $P^{\perp}((H+1)^{-1}-(-\Delta+1)^{-1})$, namely,\begin{align}G(z,y)=\cyc{\delta_{z},P^{\perp}((H+1)^{-1}-(-\Delta+1)^{-1})\delta_y}, \end{align} obeys the bound \begin{align}|G(z,y)|\lesssim \left(\tfrac{|z|\wedge |y|}{|z|+|y|}\right)^p(|z|^{-1}\wedge |z|^{-\frac{1}{2}})(|y|^{-1}\wedge |y|^{-\frac{1}{2}})\end{align} for all $p\in (-\sqrt{2},\sqrt{2})$. 

\begin{proof} Given two invertible operators $A,B$, we have the resolvent identity \begin{align}\label{resolvent}A^{-1}-B^{-1}=B^{-1}(B-A)B^{-1}+B^{-1}(B-A)A^{-1}(B-A)B^{-1}.\end{align} As $(-\Delta+1)-(H+1)=\tfrac{1}{4}|x|^{-2}$, by \eqref{resolvent} we see that 
\begin{align}P^{\perp}(H+1)^{-1}-P^{\perp}(-\Delta+1)^{-1}&=\text{(I)} + \text{(II)},\end{align}
where 
\begin{align}\text{(I)}&:= \tfrac{1}{4}\cdot P^{\perp}(-\Delta+1)^{-1}|x|^{-2}P^{\perp}(-\Delta+1)^{-1}\qtq{and}\\ \text{(II)}&:=\tfrac{1}{16}\cdot P^{\perp}(-\Delta+1)^{-1}|x|^{-2}P^{\perp}(H+1)^{-1}|x|^{-2}P^{\perp}(-\Delta+1)^{-1}.\end{align}Let $G_{(\text{I})}(z,y)$ and  $G_{(\text{II})}(z,y)$ be the integral kernels of (I) and (II), respectively, defined as above. It follows, then, from Cauchy-Schwarz and \eqref{kernelbounds1} that \begin{align}|G_{\text{(I)}}(z,y)|&\lesssim \langle |x|^{-1-p}P^{\perp}(-\Delta+1)^{-1}\delta_{z}, |x|^{-1+p}P^{\perp}(-\Delta+1)^{-1}\delta_{y}\rangle_{L_x^2(\mathbb{R}^2)}\nonumber\\ &\lesssim \||x|^{-1-p}P^{\perp}(-\Delta+1)^{-1}\delta_{z}\|_{L_x^2(\mathbb{R}^3)}\||x|^{-1+p}P^{\perp}(-\Delta+1)^{-1}\delta_{y}\|_{L_x^2(\mathbb{R}^3)}\nonumber\\ &\lesssim |z|^{-p} |y|^{p}(|y|^{-\frac{1}{2}}\wedge |y|^{-1})(|z|^{-\frac{1}{2}}\wedge |z|^{-1}).
\end{align}
Since $(I)$ is self-adjoint, $G_{\text{(I)}}$ is conjugate symmetric and hence obeys the same bounds with the roles of $|z|$ and $|y|$ reversed. So \begin{align}
\label{kernelboundsg1}|G_{\text{(I)}}(z,y)||&\lesssim \left(\tfrac{|z|\wedge |y|}{|z|+|y|}\right)^p(|z|^{-1}\wedge |z|^{-\frac{1}{2}})(|y|^{-1}\wedge |y|^{-\frac{1}{2}}),
\end{align}provided that $|p|<\tfrac{3}{2}$. For $|p|<\sqrt{2}$, we know by Corollary \ref{ResolventEst} that \begin{align*}
C_{p}:=\||x|^{-1+p}P^{\perp}(H+1)^{-1}|x|^{-1-p}\|_{L_x^2(\R^3)\to L_x^2(\R^3)}\lesssim_{p} 1.
\end{align*} It follows then by $\eqref{kernelbounds1}$, as before, that \begin{align}
|G_{\text{(II)}}(z,y)|&\lesssim C_{p}\||x|^{-1-p}P^{\perp}(-\Delta+1)^{-1}\delta_{z}\|_{L_x^2(\mathbb{R}^3)}\||x|^{-1+p}P^{\perp}(-\Delta+1)^{-1}\delta_{y}\|_{L_x^2(\mathbb{R}^3)}\nonumber\\ &\lesssim
C_{p}|z|^{-p} |y|^{p}(|y|^{-\frac{1}{2}}\wedge |y|^{-1})(|z|^{-\frac{1}{2}}\wedge |z|^{-1}).
\end{align} Again by conjugate symmetry of $G_{\text{(II)}}$, we see that $G_{\text{(II)}}$ obeys $\eqref{kernelboundsg1}$, as claimed. 
\end{proof}
\end{corollary}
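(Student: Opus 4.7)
The strategy is to apply the second-order resolvent identity
\[
A^{-1}-B^{-1} = B^{-1}(B-A)B^{-1} + B^{-1}(B-A)A^{-1}(B-A)B^{-1}
\]
with $A=H+1$ and $B=-\Delta+1$. Since $B-A=\tfrac{1}{4}|x|^{-2}$, and since $P^{\perp}$ commutes with both resolvents, this expresses $P^{\perp}((H+1)^{-1}-(-\Delta+1)^{-1})$ as a sum of two terms: a first-order term $(\mathrm I)$ proportional to $P^{\perp}(-\Delta+1)^{-1}|x|^{-2}P^{\perp}(-\Delta+1)^{-1}$, and a second-order term $(\mathrm{II})$ proportional to $P^{\perp}(-\Delta+1)^{-1}|x|^{-2}P^{\perp}(H+1)^{-1}|x|^{-2}P^{\perp}(-\Delta+1)^{-1}$. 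This reduces the problem to estimating the kernels of these two operators pointwise.

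For the first-order term I would split the middle weight as $|x|^{-2}=|x|^{-1-p}\cdot|x|^{-1+p}$ and apply Cauchy--Schwarz to the inner product $\langle \delta_z,(\mathrm I)\delta_y\rangle$, obtaining
\[
|G_{(\mathrm I)}(z,y)| \lesssim \||x|^{-1-p}P^{\perp}(-\Delta+1)^{-1}\delta_z\|_{L_x^2}\cdot \||x|^{-1+p}P^{\perp}(-\Delta+1)^{-1}\delta_y\|_{L_x^2}.
\]
Each factor is exactly of the form estimated by Lemma \ref{kernelbounds1}, which is valid for $|p|<\tfrac{3}{2}$ and which returns $|z|^{-p}(|z|^{-1}\wedge|z|^{-1/2})$ and $|y|^{p}(|y|^{-1}\wedge|y|^{-1/2})$ respectively.

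The second-order term $(\mathrm{II})$ requires one more step. I would again split each copy of $|x|^{-2}$ as $|x|^{-1-p}\cdot|x|^{-1+p}$, arranged so that the combination $|x|^{-1+p}P^{\perp}(H+1)^{-1}|x|^{-1-p}$ appears as a middle block. This block is bounded on $L_x^2$ precisely by Corollary \ref{ResolventEst}, provided $|p|<\sqrt{2}$; this is the restrictive constraint which dictates the range of $p$ in the claim. The two outer factors are then handled exactly as in the first-order case, yielding the same $|z|^{-p}|y|^{p}(|z|^{-1}\wedge|z|^{-1/2})(|y|^{-1}\wedge|y|^{-1/2})$ bound for $G_{(\mathrm{II})}(z,y)$.

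The final step is to symmetrize the bound. Since each of $(\mathrm I)$ and $(\mathrm{II})$ is self-adjoint, its kernel is conjugate symmetric, so the same estimate must hold with the roles of $z$ and $y$ interchanged. This gives the dual bound with $|z|^{p}|y|^{-p}$ in place of $|z|^{-p}|y|^{p}$; taking the smaller of the two ratio factors yields $(|z|\wedge|y|)^{p}(|z|\vee|y|)^{-p}\sim \bigl(\tfrac{|z|\wedge|y|}{|z|+|y|}\bigr)^{p}$, completing the bound. The chief obstacle is the middle $(H+1)^{-1}$ in $(\mathrm{II})$: estimating it in a weighted $L^2$ pairing is not routine, and it is exactly the Combes--Thomas weighted resolvent estimate from Corollary \ref{ResolventEst} that makes the argument go through, at the cost of narrowing the range to $|p|<\sqrt{2}$.
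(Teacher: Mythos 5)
Your proposal is correct and follows essentially the same route as the paper: the second-order resolvent identity with $B-A=\tfrac14|x|^{-2}$, the split $|x|^{-2}=|x|^{-1-p}|x|^{-1+p}$ with Cauchy--Schwarz and Lemma \ref{kernelbounds1} for the outer factors, the weighted resolvent bound of Corollary \ref{ResolventEst} for the middle block in the second-order term (which imposes $|p|<\sqrt2$), and symmetrization via conjugate symmetry of the kernels. No gaps to report.
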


\section{Proofs of Main Results}
In this section we begin by proving Theorem \ref{1004}, and conclude by deducing a number of important Bernstein estimates. 

\begin{reptheorem}{1004}
\label{1150} Let $(q,s)\in [1,\infty)\times [1,\infty)$ satisfy
\begin{align}
\label{1029}\tfrac{4}{3}\le \tfrac{1}{q}+\tfrac{1}{s'}\le \tfrac{5}{3}.
\end{align} If, additionally, \begin{align}\label{1030}\sqrt{2}>\max\{2-\tfrac{3}{s'}, 2-\tfrac{3}{q}, \tfrac{3}{q}-\tfrac{5}{2}, \tfrac{3}{s'}-\tfrac{5}{2}\},\end{align} then 

 \begin{align}\label{1028}\| P^{\perp}((H+1)^{-1}-(-\Delta+1)^{-1}) \|_{L_x^{q}(\mathbb{R}^3)\to L_{x}^{s}(\mathbb{R}^3)}<\infty.\end{align}

\begin{proof} We will only treat the case $q,s\in [1,2]\times [1,2]$. For if $(q,s)\in [2,\infty)\times [2,\infty)$ satisfy \eqref{1029} and \eqref{1030}, then $(s',q')$ belongs to $[1,2]\times [1,2]$ and satisfy $\eqref{1029}$ and $\eqref{1030}$. If the theorem is known to be true in this case, then \[\| P^{\perp}((H+1)^{-1}-(-\Delta+1)^{-1}) \|_{L_x^{s'}(\mathbb{R}^3)\to L_{x}^{q'}(\mathbb{R}^3)}<\infty,\] but then by duality it must also follow that \[\| P^{\perp}((H+1)^{-1}-(-\Delta+1)^{-1}) \|_{L_x^{q}(\mathbb{R}^3)\to L_{x}^{s}(\mathbb{R}^3)}<\infty.\] 

Now, by duality, it suffices to show that \[\sup_{||g||_{L_y^{s'}(\mathbb{R}^3)}\le 1}\sup_{ ||f||_{L_x^{q}(\mathbb{R}^3)}\le 1}\left|\iint_{\mathbb{R}^3\times \mathbb{R}^3}G(x,y)f(x)g(y)\ dx dy\right|<\infty.\] Suppose, then, that $||f||_{L_x^q(\mathbb{R}^3)}\le 1$ and $||g||_{L_y^{s'}(\mathbb{R}^3)}\le 1$. As $G$ depends only on $|x|$ and $|y|$ we may, and do, assume without loss of generality that $g=g(|y|)$ and $f=f(|x|)$ are radial. There are precisely six regions to consider, each case corresponding to an ordering of $|x|$, $|y|$, and $1$ (e.g. $|x|\ge |y|\ge 1$, $|x|\ge 1\ge |y|$, etc.). By symmetry, it suffices to treat the three cases in which we always have $|x|\ge |y|$.  
\begin{enumerate}[(1)]

\item With $A=\{(x,y)\in \R^3\times \R^3: |x|\ge |y|\ge 1\}$, we have by Corollary \ref{1038}, for any $p\in (-\sqrt{2},\sqrt{2})$, that \begin{align}&\int_{A}|G(x,y)f(x)g(y)|\ dx dy\nonumber \\ &\lesssim \int_{1}^{\infty} \int_{1}^{r} r^{1-p}\rho^{1+p}|f(r)||g(\rho)|d\rho dr\nonumber \\ &\lesssim \int_{1}^{\infty} \int_{1}^{r} r^{1-\frac{2}{q}-p}\rho^{1-\frac{2}{s'}+p}|f(r)r^{\frac{2}{q}}||g(\rho)\rho^{\frac{2}{s'}}|d\rho dr\nonumber \\ &\lesssim \sum_{k=0}^{\infty}\sum_{j=0}^{k}2^{k(1-\frac{2}{q}+\frac{1}{q'}-p)}2^{j(1-\frac{2}{s'}+\frac{1}{s}+p)}||f||_{L_x^{q}(|x|\sim 2^{k})}||g||_{L_y^{s'}(|y|\sim 2^{j})}\nonumber \\ &= \sum_{0\le j\le k}\left(\tfrac{2^j}{2^k}\right)^{p+\frac{3}{q}-2}2^{j(4-\frac{3}{q}-\frac{3}{s'})}||f||_{L_x^{q}(|x|\sim 2^{k})}||g||_{L_y^{s'}(|y|\sim 2^{j})}
\end{align} 
We may choose $p$ so that $2-\frac{3}{q}-p<0$. Since $\frac{1}{q}+\frac{1}{s'}\ge \frac{4}{3}$ we see that by Schur's Test  \begin{align}\int_{A}|G(x,y)f(x)g(y)|\ dx dy&\le \left(\sum_{k=0}^{\infty}||f||_{L_x^{q}(|x|\sim 2^{k})}^2\right)^{\frac{1}{2}}\left(\sum_{k=0}^{\infty}||g||_{L_y^{s'}(|x|\sim 2^{j})}^2\right)^{\frac{1}{2}}\nonumber \\ &\lesssim ||f||_{L_x^{q}(\mathbb{R}^3)}||g||_{L_y^{s'}(\mathbb{R}^3)}\nonumber \\ &\lesssim 1,\end{align}
since $q, s'\le 2$. 

\item With $B=\{(x,y)\in \R^3\times \R^3: |x|\ge 1\ge |y|\}$, we similarly have by Corollary \ref{1038}, for any $p\in (-\sqrt{2},\sqrt{2})$, that 
\begin{align}&\int_{B}|G(x,y)f(x)g(y)|\ dx dy\nonumber \\ &\lesssim \int_{1}^{\infty} \int_{0}^{1} r^{1-\frac{2}{q}-p}\rho^{\frac{3}{2}-\frac{2}{s'}+p}|f(r)r^{\frac{2}{q}}||g(\rho)\rho^{\frac{2}{s'}}|d\rho dr\nonumber \\ &\lesssim \sum_{k=0}^{\infty}\sum_{j=0}^{\infty}2^{k(1-\frac{2}{q}+\frac{1}{q'}-p)}2^{-j(\frac{3}{2}-\frac{2}{s'}+\frac{1}{s}+p)}||f||_{L_x^{q}(|x|\sim 2^{k})}||g||_{L_y^{s'}(|y|\sim 2^{j})}.\end{align} Choosing $p$ so that both $\frac{5}{2}-\frac{3}{s'}+p>0$ and, simultaneously, $2-\frac{3}{q}-p<0$, by Cauchy-Schwarz, we see that \begin{align}
\int_{B}|G(x,y)f(x)g(y)|\ dx dy\lesssim ||f||_{L_x^{q}(\mathbb{R}^3)}||g||_{L_y^{s'}(\mathbb{R}^3)}\lesssim 1.
\end{align}

\item With $C=\{(x,y)\in \R^3\times \R^3: 1\ge |x|\ge |y|\}$, we similarly compute that  \begin{align}&\int_{C}|G(x,y)f(x)g(y)|\ dx dy\nonumber \\ &\lesssim \int_{0}^{1} \int_{0}^{r} r^{\frac{3}{2}-p}\rho^{\frac{3}{2}+p}|f(r)||g(\rho)|d\rho dr\nonumber \\ &\lesssim \int_{0}^{1} \int_{0}^{r} r^{\frac{3}{2}-\frac{2}{q}-p}\rho^{\frac{3}{2}-\frac{2}{s'}+p}|f(r)r^{\frac{2}{q}}||g(\rho)\rho^{\frac{2}{s'}}|d\rho dr\nonumber \\ &\lesssim \sum_{0\le k\le j}\left(\tfrac{2^{-j}}{2^{-k}}\right)^{\tfrac{5}{2}-\tfrac{3}{s'}+p}2^{-k(5-\tfrac{3}{s'}-\tfrac{3}{q})}||f||_{L_x^{q}(|x|\sim 2^{k})}||g||_{L_y^{s'}(|y|\sim 2^{j})}\end{align}

We may choose a suitable $p$ satisfying $\tfrac{5}{2}-\tfrac{3}{s'}+p>0$. As $\tfrac{3}{s'}+\tfrac{3}{q}\le 5$, by Schur's Test we get 
\begin{align}\int_{C}|G(x,y)f(x)g(y)|\ dx dy\lesssim 1,
  \end{align}as desired.
\end{enumerate}

\end{proof}
\end{reptheorem}

We now pause to enjoy a few specific dividends of Theorem \ref{1004}. 

\begin{repcorollary}{134}[Sharp Sobolev Embedding] If $f:\mathbb{R}^3\to \C$ is Schwartz, then \begin{align}\|P^{\perp}(H+1)^{-1}f \|_{L_x^6(\mathbb{R}^3)}\lesssim ||f||_{L_x^{2}(\mathbb{R}^3)}.\end{align}
\end{repcorollary}
\begin{proof}  If $f:\mathbb{R}^3\to \C$ is Schwartz, then 
\begin{align}&\|P^{\perp}(H+1)^{-1}f \|_{L_x^6(\mathbb{R}^3)}\\ &\le \|P^{\perp}(-\Delta+1)^{-1}f \|_{L_x^6(\mathbb{R}^3)}+\|P^{\perp}((H+1)^{-1}-(-\Delta+1)^{-1})f \|_{L_x^{6}(\mathbb{R}^3)}
\\ &\lesssim ||f||_{L_x^{2}(\mathbb{R}^3)}\end{align}
 by Theorem \ref{1004} since the pair $(2,6)$ satisfies \[\tfrac{4}{3}=\tfrac{1}{2}+\tfrac{5}{6}\le \tfrac{5}{3}\qtq{and} \sqrt{2}>\max\{-\tfrac{1}{2},\tfrac{1}{2},-1,0\}.\]
\end{proof}

\begin{repcorollary}{1005}If $f:\mathbb{R}^3\to \C$ is Schwartz, then \begin{align}\label{1203} \|P^{\perp}(H+1)^{-1}f \|_{L_x^\infty(\mathbb{R}^3)}&\lesssim ||f||_{L_x^{2}(\mathbb{R}^3)}\\ \label{1204} \|P^{\perp}(H+1)^{-1}f \|_{L_x^2(\mathbb{R}^3)}&\lesssim ||f||_{L_x^{1}(\mathbb{R}^3)}.\end{align}
\end{repcorollary}
\begin{proof}  To see the first inequality, note that if $f:\mathbb{R}^3\to \C$ is Schwartz, then 
\begin{align*}&\|P^{\perp}(H+1)^{-1}f \|_{L_x^\infty(\mathbb{R}^3)}\\ &\le \|P^{\perp}(-\Delta+1)^{-1}f \|_{L_x^\infty(\mathbb{R}^3)}+\|P^{\perp}((H+1)^{-1}-(-\Delta+1)^{-1})f \|_{L_x^{\infty}(\mathbb{R}^3)}
\\ &\lesssim ||f||_{L_x^{2}(\mathbb{R}^3)}.\end{align*}
 by Theorem \ref{1004} since the pair $(2,\infty)$ satisfies \[\tfrac{4}{3}\le \tfrac{1}{2}+1\le \tfrac{5}{3}\qtq{and} \sqrt{2}>\max\{-1,\tfrac{1}{2},-1,\tfrac{1}{2}\}.\] The second inequality follows from duality. 
\end{proof}

\begin{reptheorem}{1006}[Bernstein Estimates] The operator $P^{\perp}e^{-H}:L^1\to L^\infty$ is bounded.
\begin{proof}
First, write \[P^{\perp}e^{-H}=(H+1)^{-1}P^{\perp}e^{-H}(H+1)^2(H+1)^{-1}P^{\perp}.\] 
It follows that \begin{align*}\|P^{\perp}e^{-H}\|_{L_x^1\to L_x^\infty}&=\|(H+1)^{-1}P^{\perp}e^{-H}(H+1)^2(H+1)^{-1}P^{\perp}\|_{L_x^1\to L_x^\infty}\\ &\le \|(H+1)^{-1}P^{\perp}\|_{L_x^2\to L_x^\infty} \|e^{-H}(H+1)^2\|_{L_x^2\to L_x^2} \|(H+1)^{-1}P^{\perp}\|_{L_x^1\to L_x^2}. \end{align*} By \eqref{1203}, $\|(H+1)^{-1}P^{\perp}\|_{L_x^2\to L_x^1}<\infty$. By \eqref{1204}, $ \|(H+1)^{-1}P^{\perp}\|_{L_x^1\to L_x^2}<\infty$. Lastly, $\|e^{-H}(H+1)^2\|_{L_x^2\to L_x^2}<\infty$ by the Spectral Theorem.
\end{proof}
\end{reptheorem}
%
%
%
\newpage

\bibliography{ResolventEstimates}
\end{document}